\numberwithin{equation}{section}
\newtheorem{thm}[equation]{Theorem}
\newtheorem{lemma}[equation]{Lemma}
\theoremstyle{remark}
\newtheorem*{remark}{Remark}
\newtheorem{defn}[equation]{Definition}
\newcommand{\F}{\mathbb{F}}
\renewcommand{\bar}[1]{#1\llap{$\overline{\phantom{\rm#1}}$}}
\newcommand{\bF}{\bar\F}
\newif\ifpdf
\begin{document}

\title{Low-degree planar monomials in characteristic two}

\author{Peter M\"uller}
\address{
  Institut f\"ur Mathematik,
  Universit\"at W\"urzburg,
  Am Hubland,
  D-97074 \\ W\"urzburg, Germany
}
\email{Peter.Mueller@mathematik.uni-wuerzburg.de}
\urladdr{www.mathematik.uni-wuerzburg.de/$\sim$mueller}

\author{Michael E. Zieve}
\address{
  Department of Mathematics,
  University of Michigan,
  530 Church Street,
  Ann Arbor, MI 48109-1043 USA
}
\email{zieve@umich.edu}
\urladdr{http://www.math.lsa.umich.edu/$\sim$zieve/}

\subjclass[2010]{51E20, 11T06, 11T71, 05B05}

\date{May 28, 2013}

\thanks{The second author thanks the NSF for support under grant DMS-1162181.}
 
%#######################################################################
%#######################################################################

% \tableofcontents
\begin{abstract}
Planar functions over finite fields give rise to finite projective
planes and other combinatorial objects.  They exist only in odd
characteristic, but recently Zhou introduced an even characteristic
analogue which has similar applications.  In this paper we determine all planar functions on $\F_q$
of the form $c\mapsto ac^t$, where $q$ is a power of $2$, $t$ is an integer with $0< t\le q^{1/4}$, and $a\in\F_q^*$.
This settles and sharpens a conjecture of Schmidt and Zhou.
\end{abstract}

\maketitle

%#######################################################################
%#######################################################################
%#######################################################################

\section{Introduction}

Let $q=p^r$ where $p$ is prime and $r$ is a positive integer.  If $p$
is odd then a \textit{planar function} is a function
$f\colon\F_q\to\F_q$ such that, for every $b\in\F_q^*$, the function
$c\mapsto f(c+b)-f(c)$ is a bijection on $\F_q$.  Planar functions
have been used to construct finite projective planes \cite{DO},
relative difference sets \cite{GS}, error-correcting codes \cite{CDY},
and $S$-boxes in block ciphers which are optimally resistant to
differential cryptanalysis \cite{NK}.

If $p=2$ then there are no functions $f\colon\F_q\to\F_q$ satisfying
the defining property of a planar function, since $0$ and $b$ have the
same image as one another under the map $c\mapsto f(c+b)-f(c)$.
Recently Zhou \cite{Zhou} introduced a characteristic $2$ analogue of
planar functions, which have the same types of applications as do
odd-characteristic planar functions.  These will be the focus of the
present paper.  If $p=2$, we say that a function $f\colon\F_q\to\F_q$
is planar if, for every $b\in\F_q^*$, the function $c\mapsto
f(c+b)+f(c)+bc$ is a bijection on $\F_q$.  Schmidt and Zhou showed that any
function satisfying this definition can be used to produce a relative
difference set with parameters $(q,q,q,1)$, a finite projective plane,
and certain codes with unusual properties \cite{SZ,Zhou}.  In what
follows, whenever we refer to a planar function in characteristic $2$,
we mean a function satisfying Zhou's definition.

We will make progress towards a classification of planar polynomials
in characteristic $2$.  All known planar functions in characteristic
$2$ have the form $c\mapsto f(c)$ where $f(X)\in\F_q[X]$ is a
polynomial in which the degree of every term is the sum of at most two
powers of $2$ \cite{ScherrZ,SZ,Zhou}.  Our main result describes all
planar monomials of degree at most $q^{1/4}$:

\begin{thm} \label{main}
Let $t$ be a positive integer such that $t^4\le 2^r$, and let $a$ be any element of $\F_{2^r}^*$.
The function $c\mapsto ac^t$ is planar
on $\F_{2^r}$ if and only if $t$ is a power of $2$.
\end{thm}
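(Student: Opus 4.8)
Write $q=2^r$ throughout. The forward direction is immediate: if $t=2^k$, then $(c+b)^t=c^t+b^t$ gives $a(c+b)^t+ac^t+bc=ab^t+bc$, an affine function of $c$ with nonzero leading coefficient $b$, hence a bijection. So assume $t$ is not a power of $2$ (whence $t\ge 3$); I must exhibit some $b\in\F_q^*$ for which $c\mapsto a(c+b)^t+ac^t+bc$ fails to be injective. The first step is a normalization: replacing $c$ by $bc$ and dividing by $ab^t$ shows that $c\mapsto a(c+b)^t+ac^t+bc$ is a bijection if and only if $c\mapsto\phi(c)+\lambda c$ is, where $\phi(X)=(X+1)^t+X^t$ and $\lambda=b^{2-t}/a$. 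As $b$ ranges over $\F_q^*$, the parameter $\lambda$ ranges over the coset $a^{-1}H$, where $H=\{z^{t-2}:z\in\F_q^*\}$ is the subgroup of $m$-th powers and $m=\gcd(t-2,q-1)$; note $m$ is odd, since it divides $q-1=2^r-1$.

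Since the map $X\mapsto\phi(X)+\lambda X$ on $\F_q$ is non-injective precisely when $\Psi(X,Y)=\lambda$ for some $X\ne Y$, where $\Psi(X,Y)=(\phi(X)+\phi(Y))/(X+Y)$ is the (polynomial) difference quotient, I conclude that $ac^t$ is \emph{non}-planar if and only if there exist $X\ne Y$ in $\F_q$ with $a\Psi(X,Y)\in H$. Equivalently, the curve $Z^m=a\Psi(X,Y)$ must carry an $\F_q$-point with $X\ne Y$ and $Z\ne 0$. When $m=1$ this holds trivially: $\Psi$ is a nonzero polynomial of degree below $q$ (it is nonzero exactly because $t$ is not a power of $2$), so it cannot vanish on all of $\{X\ne Y\}$. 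The substance lies in the case $m>1$.

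To reduce to a single curve I fix $Y=Y_0$ for a suitable $Y_0\in\F_q$ and study $C_{Y_0}\colon Z^m=h(X)$ with $h(X)=a\Psi(X,Y_0)\in\F_q[X]$, of degree $\deg\phi-1\le t-2$. The heart of the matter is to prove $C_{Y_0}$ absolutely irreducible of genus $O(t^2)$, and this is the step I expect to demand the most care. Writing $t=2^v t_0$ with $t_0$ odd ($t_0\ge 3$), the identity $\phi=\phi_{t_0}^{2^v}$, where $\phi_{t_0}(X)=(X+1)^{t_0}+X^{t_0}$, yields $\Psi(X,Y)=(X+Y)^{2^v-1}\Psi_{t_0}(X,Y)^{2^v}$. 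Because $t_0$ is odd, $\phi_{t_0}$ has nonzero derivative, so $\phi_{t_0}(X)-\phi_{t_0}(Y_0)$ is separable for all but $O(t)$ values of $Y_0$; such a $Y_0\in\F_q$ exists since $q$ is large. For such $Y_0$ the polynomial $\phi_{t_0}(X)-\phi_{t_0}(Y_0)$ has a simple root $\alpha\ne Y_0$, which is then a simple root of $\Psi_{t_0}(X,Y_0)$ and hence occurs in $h(X)$ with multiplicity $2^v$. As $\gcd(2^v,m)=1$, the polynomial $h$ has a root of multiplicity prime to $m$, so it is not a proper $d$-th power for any $d\mid m$ with $d>1$, even after extending scalars; therefore $Z^m=h(X)$ is absolutely irreducible. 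As a superelliptic curve with $m\le t-2$ and $\deg h\le t-2$, its genus is at most $\tfrac12(m-1)(\deg h-1)=O(t^2)$.

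Finally I invoke Weil's bound. For the absolutely irreducible curve $C_{Y_0}/\F_q$ of genus $g=O(t^2)$ one has $\#C_{Y_0}(\F_q)\ge q+1-2g\sqrt q$, while the points to be discarded (those with $Z=0$, those with $X=Y_0$, and those at infinity) number only $O(t)$. Hence a usable point exists as soon as $2g\sqrt q+O(t)<q$, that is, whenever $q$ exceeds a fixed multiple of $g^2=O(t^4)$; the hypothesis $t^4\le q$ is precisely what secures this, the discarded points being of lower order. Any such point gives $X\ne Y_0$ with $a\Psi(X,Y_0)=Z^m\in H$, so $\lambda:=\Psi(X,Y_0)\in a^{-1}H$ equals $b^{2-t}/a$ for some $b\in\F_q^*$, and $X,Y_0$ witness the non-injectivity of $c\mapsto a(c+b)^t+ac^t+bc$. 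Thus $ac^t$ is not planar, completing the converse.
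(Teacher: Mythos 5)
Your argument is correct, but it takes a genuinely different route from the paper's. The paper works in the $(c,b)$-plane: it forms $H(X,Y)=a\frac{(X+1)^t+X^t+1}{X}+Y^{t-2}$, whose $\F_q$-zeros record collisions $f(c,b)=f(0,b)$, proves absolute irreducibility by applying Capelli's binomial criterion in the variable $Y$ (the relevant quantity is not an $\ell$-th power because its roots $0$ and $1$ have the coprime multiplicities $2^m-1$ and $2^m$), and then gets a contradiction because planarity forces at most $2(t-2)$ zeros while Weil forces roughly $q$. You instead work with the two-point difference quotient $\Psi(X,Y)=((X+1)^t+X^t+(Y+1)^t+Y^t)/(X+Y)$ --- exactly the polynomial $\bar H$ of the paper's closing remark --- but where that sketched alternative must invoke Lorenzini's theorem to handle the whole family of translates $\bar H+d$ and then still needs extra work when $t$ is a sum of two powers of $2$, you absorb the translation parameter into a single Kummer cover $Z^m=a\Psi(X,Y_0)$ along a well-chosen line $Y=Y_0$, prove its absolute irreducibility by the superelliptic form of Capelli's criterion (a root of multiplicity $2^v$ coprime to the odd $m$), and let Weil produce a rational point that directly witnesses non-planarity. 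This buys a uniform treatment of all non-$2$-power $t$, at the cost of some extra bookkeeping (the case $m=1$, the choice of $Y_0$, and the discarded places). Two quantitative points worth tightening, neither of which affects correctness under $t^4\le q$: the set of $Y_0$ for which $\phi_{t_0}(X)-\phi_{t_0}(Y_0)$ is inseparable is most naturally bounded by $O(t^2)$ rather than $O(t)$, since each of the at most $t_0-2$ critical values of $\phi_{t_0}$ has up to $t_0-1$ preimages; and the final inequality should be verified explicitly, e.g.\ via $2g\le(t-3)^2$ and $\sqrt q\ge t^2$, which give $q-2g\sqrt q\ge(6t-9)t^2$, comfortably dominating the $O(t)$ places lying over $X=\infty$, over the roots of $h$, and over $X=Y_0$.
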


When $t$ is odd, this result strengthens the main result of \cite{SZ},
which required $c\mapsto ac^t$ to be planar on infinitely many finite
extensions of $\F_{2^r}$.  In case $t$ is even, Schmidt and Zhou have
conjectured that if $c\mapsto ac^t$ is planar on infinitely many
finite extensions of $\F_{2^r}$ then $t$ must be a power of $2$
\cite[Conj.~10]{SZ}. Theorem~\ref{main} settles and sharpens this
conjecture, and gives a simpler proof of their result for odd $t$.

In the recent paper \cite{Zp}, the second author proved an analogue of
Theorem~\ref{main} over finite fields of odd cardinality.  However, in light of the 
subtle difference between the definitions
of planarity in odd and even characteristics, the
arguments in the odd characteristic proof are irrelevant here, and
vice-versa.  We also remark that our proof is completely different from
the proof of the weaker version of the ``odd $t$'' case of
Theorem~\ref{main} proved in \cite{SZ}, which involved a $14$-page
computation of the shapes of singularities of certain curves.

Our proof of Theorem~\ref{main} relies on a version of Weil's bound for singular plane curves.  We apply Weil's bound to
a carefully constructed auxiliary curve $C$ which by construction has few $\F_q$-rational points.
In order to apply Weil's bound, we must first show that $C$ is irreducible over the algebraic closure of $\F_q$.
We will show that, due to the specific form of the map $c\mapsto ac^t$, this irreducibility can
be deduced from a generalization of Capelli's 1898 result about irreducibility of binomials \cite{Capelli}.
We also sketch an alternate proof which, instead of Capelli's result, uses Lorenzini's theorem about the number of
irreducible translates of a bivariate polynomial \cite{Lorenzini}.

%#######################################################################
%#######################################################################

\section{The main result}

Our proof relies on a version of Weil's bound for (possibly singular) affine
plane curves.  Write $\bF_q$ for a fixed algebraic closure of $\F_q$.

\begin{defn}
A polynomial in\/ $\F_q[X,Y]$ is \emph{absolutely irreducible} if it
is irreducible in $\bF_q[X,Y]$.
\end{defn}

\begin{lemma} \label{weil}
Let $H(X,Y)\in\F_q[X,Y]$ be an absolutely irreducible polynomial. The
number of zeroes of $H(X,Y)$ in\/ $\F_q\times\F_q$ is at least
$q+1-(d-1)(d-2)\sqrt{q} - d$, where $d:=\deg(H)$.
\end{lemma}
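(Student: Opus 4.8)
The plan is to pass from the affine curve cut out by $H$ to a smooth projective model, apply the classical Weil bound for smooth curves there, and then account for the two sources of discrepancy: the points at infinity and the singularities of the plane curve. First I would let $C$ denote the affine plane curve $H(X,Y)=0$ and let $\overline{C}\subset\P^2$ be its projective closure, defined by the degree-$d$ homogenization of $H$. Since $H$ is absolutely irreducible, $\overline{C}$ is a geometrically irreducible projective plane curve of degree $d$ over $\F_q$. Let $\nu\colon\widetilde{C}\to\overline{C}$ be the normalization, so that $\widetilde{C}$ is a smooth projective geometrically irreducible curve over $\F_q$; write $g$ for its genus. The arithmetic genus of a plane curve of degree $d$ equals $(d-1)(d-2)/2$, and the genus drop across the normalization is the total singularity defect $\delta:=\sum_{P}\delta_{P}$, so that $g=(d-1)(d-2)/2-\delta$ and hence $2g+2\delta=(d-1)(d-2)$. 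Applying the Weil bound to $\widetilde{C}$ gives $\#\widetilde{C}(\F_q)\ge q+1-2g\sqrt{q}$.

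Next I would push this estimate back down to the affine zeroes of $H$ in two controlled steps. The map $\nu$ carries $\F_q$-points to $\F_q$-points and is a bijection on $\F_q$-points above the smooth locus of $\overline{C}$, so the counts for $\widetilde{C}$ and $\overline{C}$ can differ only over singular points; at a singular point $P$ the number of $\F_q$-rational branches exceeds $1$ by at most $\delta_{P}$, whence $\#\overline{C}(\F_q)\ge\#\widetilde{C}(\F_q)-\delta$. Separately, the line at infinity meets $\overline{C}$ in a divisor of degree $d$, so $\overline{C}$ has at most $d$ points with $Z=0$; deleting these from $\overline{C}(\F_q)$ leaves exactly the affine $\F_q$-zeroes of $H$, so their number $N$ satisfies $N\ge\#\overline{C}(\F_q)-d$.

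Combining these inequalities gives $N\ge q+1-2g\sqrt{q}-\delta-d$, and the argument closes with the elementary remark that $\delta\le 2\delta\sqrt{q}$ since $2\sqrt{q}\ge 1$; therefore $2g\sqrt{q}+\delta\le(2g+2\delta)\sqrt{q}=(d-1)(d-2)\sqrt{q}$, yielding $N\ge q+1-(d-1)(d-2)\sqrt{q}-d$. The cases $d\le 2$, where $\delta=0$ and $(d-1)(d-2)=0$, are subsumed with no extra work. I expect the middle paragraph to be the main obstacle: the comparison between $\overline{C}$ and its normalization must be made uniform, and the key structural point is that each unit of singularity defect simultaneously lowers the genus by one—buying a factor $2\sqrt{q}$ in the Weil estimate—and costs at most one rational branch, so the singular contribution is absorbed into the genus term rather than surfacing as a separate error. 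A shortcut avoiding this bookkeeping is to invoke the Aubry--Perret singular Weil bound $\lvert\#\overline{C}(\F_q)-(q+1)\rvert\le(d-1)(d-2)\sqrt{q}$ directly and then subtract only the at most $d$ points at infinity.
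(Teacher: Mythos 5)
Your proposal is correct, and it follows exactly the route the paper indicates: the paper gives no self-contained proof of this lemma, only a remark identifying Weil's bound for smooth projective curves as the key ingredient and citing \cite{LY} and \cite{AP} for the deduction, which proceeds by precisely your normalization-plus-singularity-defect bookkeeping (your closing shortcut is literally the Aubry--Perret statement being cited). The details you supply --- $g+\delta=(d-1)(d-2)/2$, the loss of at most $\delta_P$ rational points per singularity, at most $d$ points at infinity by B\'ezout, and the absorption of $\delta$ into the $\sqrt{q}$ term --- are all sound.
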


\begin{remark}
The key ingredient in the proof of Lemma~\ref{weil} is Weil's bound on
the number of $\F_q$-rational points on a smooth projective curve over
$\F_q$ of prescribed genus \cite[p.~70, Cor.~3]{Weil}.
Lemma~\ref{weil} is deduced from Weil's bound in \cite[Cor.~2(b)]{LY}
and \cite[Cor.~2.5]{AP}.  Since this result has been the source of
some confusion, we now clarify the relevant literature.  The first
attempt to prove a version of Lemma~\ref{weil} occurred in
\cite[p.~331]{LN}, and was based on the mistaken notion that Weil's
bound for the number of points on a smooth projective curve remains
true for singular curves (a counterexample is $XYZ+Y^3+Y^2Z+Z^3=0$
over $\F_2$).  The next attempt was \cite[Thm.~4.9]{FJ}, which was
based on the mistaken notion that the number of points on a singular
projective curve is no bigger than the number of degree-one places in
its function field (a counterexample is $XY^2+XYZ+XZ^2+Y^3+Y^2Z+Z^3=0$
over $\F_2$).  The version of Lemma~\ref{weil} stated in
\cite[Rem.~8.4.18]{Z} is missing the final $d$.  Finally, we note that
one must replace the stated value of $k_d$ in \cite[Lemma~3.2]{AP} by
$(d-1)(d-2)/2$, in order to make the result follow from its proof.
The stated value of $k_d$ appears to have arisen from an incorrect
formula for the discriminant of a quadratic polynomial.
\end{remark}

\begin{proof}[Proof of Theorem~\ref{main}]
If $t$ is a power of $2$  then $c\mapsto ac^t$ is planar on $\F_q$ for every (even) $q$ and every $a\in\F_q^*$,
since $a(X+b)^t+aX^t+bX=ab^t+bX$ is a degree-one polynomial (and hence is bijective on $\F_q$) for every
$b\in\F_q^*$.  Henceforth assume that $t$ is a positive integer which is not a power of $2$, so $t\ge 3$.
Let $r$ be a positive integer such that $t^4\le 2^r$, and put $q:=2^r$.  Pick $a\in\F_q^*$, and suppose that the function $c\mapsto
ac^t$ is planar on $\F_q$. This means that $c\mapsto a(c+b)^t+ac^t+bc$
is bijective on $\F_q$ for every $b\in\F_q^*$. Upon composing on the
right with $c\mapsto bc$ and on the left with $c\mapsto b^{-t}c$, it
follows that $c\mapsto a((c+1)^t+c^t)+b^{2-t} c$ is bijective on
$\F_q$. Set $f(X,Y):=a((X+1)^t+X^t)+Y^{t-2}X\in \F_q[X,Y]$. Upon
replacing $b$ with $b^{-1}$ we see that $c\mapsto f(c,b)$ is bijective on $\F_q$
for all $b\in\F_q^*$. Set
\[
H(X,Y):=\frac{f(X,Y)+f(0,Y)}{X}=a\frac{(X+1)^t+X^t+1}{X}+Y^{t-2}.
\]
Let $N$ be the number of pairs $(c,b)$ of elements of $\F_q$ such that $H(c,b)=0$.
Note that if $H(c,b)=0$ for $c,b\in\F_q$, then $f(c,b)=f(0,b)$, so either $c=0$ or $b=0$.
Since $t$ is not a power of $2$, both $H(X,0)$ and $H(0,Y)$ are nonzero univariate polynomials of degree at most $t-2$;
therefore each of these polynomials has at most $t-2$ roots, so that $N\le 2(t-2)$.
Below we show that $H(X,Y)$ is
absolutely irreducible, so by Lemma~\ref{weil} we have $N\le q+1-(t-3)(t-4)\sqrt{q}-(t-2)$.
Since $q\ge t^4$ and $t\ge 3$, we compute
\begin{align*}
2(t-2) &\ge N \ge q+1-(t-3)(t-4)\sqrt{q}-(t-2) \\
&\ge t^4+1-(t-3)(t-4)t^2-(t-2) \\
&= 2(t-2)+ 7t^3-12t^2-3t+7\\
&> 2(t-2),
\end{align*}
a contradiction.

It remains to show that $H(X,Y)$ is absolutely irreducible. If this is
not the case, then Capelli's 1898 theorem about reducibility of binomials  (see e.g.\ \cite[Chapter~VI,
  Thm.~9.1]{lang:algebra}) yields a prime divisor $\ell$ of $t-2$ such
that $\frac{(X+1)^t+X^t+1}{X}$ is an $\ell$--th power. Write $t=2^mo$ with
$o$ odd. Recall that $t$ is not a power of $2$, so $o\ge3$. By taking
the derivative we see that $(X+1)^o+X^o+1$ has simple roots $0$ and
$1$. Thus the multiplicities of $0$ and $1$ of
$\frac{(X+1)^t+X^t+1}{X}=\frac{((X+1)^o+X^o+1)^{2^m}}{X}$ are $2^m-1$
and $2^m$, respectively. So $\ell$ divides $2^m$ and $2^m-1$ (also in
case $m=0$), a contradiction.
\end{proof}

\begin{remark}
The above proof yields the conclusion of Theorem~\ref{main} under a slightly weaker hypothesis than $t^4\le 2^r$,
namely that
\[
2^{r/2} >
% \frac{t^2-7t+12 + (t-2)\sqrt{t^2-10t+29}}2 =
t^2 - 7t + 12 + \frac{6t-14}{(t-2)\sqrt{t^2-10t+29} + t^2-7t+12}.
\]
\end{remark}

\begin{remark}
A different approach to proving Theorem~\ref{main} relies on the auxiliary polynomial $\bar H(X,Y)=((X+1)^t+X^t+(Y+1)^t+Y^t)/(X+Y)$.
As above, if $c\mapsto ac^t$ is planar then $\bar H(X,Y)+b^{t-2}/a$ cannot be absolutely irreducible for any $b\in\F_q^*$.
By a result of Lorenzini's \cite{Lorenzini}, it follows that $\bar H(X,Y)=F(G(X,Y))$ for some $F\in\bF_q[X]$ and $G\in\bF_q[X,Y]$
with $\deg(F)>1$.  Then a short argument implies that $t$ is the sum of two powers of $2$.  However, under this approach
the case that $t$ is the sum of two powers of $2$ requires more work.
\end{remark}

\end{document}